\documentclass[graybox]{svmult}

\usepackage{type1cm}        % activate if the above 3 fonts are
\usepackage{makeidx}         % allows index generation
\usepackage{graphicx}        % standard LaTeX graphics tool
\usepackage{multicol}        % used for the two-column index
\usepackage[bottom]{footmisc}% places footnotes at page bottom

\usepackage{newtxtext}       % 
\usepackage[varvw]{newtxmath}       % selects Times Roman as basic font
\usepackage{mathtools}
\usepackage[shortlabels]{enumitem}
\usepackage{todonotes}

\newcommand{\GammaInt}{\Gamma^{\text{i}}_h}
\newcommand{\GammaExt}{\Gamma^{\text{b}}_h}
\newcommand{\Mesh}{\mathcal{M}_h}
\newcommand{\trace}[1]{\operatorname{tr}_{\gamma}(#1)}
\newcommand{\avg}[1]{\left\lbrace\mskip-5mu\lbrace{#1}\right\rbrace\mskip-5mu\rbrace}
\newcommand{\dspace}{\mathcal{V}^r_h}
\newcommand{\binplus}{{} + }
\newcommand{\binminus}{{} - }
\newcommand{\phantomplus}{\: \phantom{+} \:}
\newcommand{\phantomminus}{\: \phantom{-} \:}
\newcommand{\phantomeq}{\mathrel{\phantom{=}}}
\newcommand{\cI}{\mathcal{I}}
\newcommand{\IE}{\mathbb{I}(E)}
\newcommand{\extop}[1]{\mathcal{L}_{#1}}
\newcommand{\iden}{\operatorname{id}}

\newcommand{\polynspace}{\mathcal{P}^r(\Omega)^m}
\newcommand{\globaltrace}{\operatorname{tr}}

\newcommand{\tlap}[1]{\vbox to 0pt{\vss\hbox{#1}}}

\makeindex             % used for the subject index
\begin{document}

\title*{On the consistency of the Domain of Dependence cut cell stabilization}
% Use \titlerunning{Short Title} for an abbreviated version of
% your contribution title if the original one is too long
\author{Gunnar Birke\orcidID{0000-0002-2008-6679},\\ Christian Engwer\orcidID{0000-0002-6041-8228},\\Jan Giesselmann\orcidID{0009-0008-0217-7244} and\\ Sandra May\orcidID{0000-0001-9178-6595}}
% Use \authorrunning{Short Title} for an abbreviated version of
% your contribution title if the original one is too long
\institute{Gunnar Birke \at Applied Mathematics Münster, Münster University, Germany, \email{gunnar.birke@uni-muenster.de}
\and Christian Engwer \at Applied Mathematics Münster, Münster University, Germany, \email{christian.engwer@uni-muenster.de}
\and Jan Giesselmann \at Department of Mathematics, Technical University of Darmstadt, Germany, \email{jan.giesselmann@tu-darmstadt.de}
\and Sandra May \at Institute of Mathematics, TU Berlin, Germany \email{may@math.tu-berlin.de}}
%
% Use the package "url.sty" to avoid
% problems with special characters
% used in your e-mail or web address
%
\maketitle

\abstract{So called cartesian cut cell meshes provide efficient ways to generate meshes but do require tailored numerical methods to not suffer from stabilization issues, especially in the hyperbolic regime where the application of explicit time stepping schemes is common. In this scenario, due to potentially arbitrarily small cut cells, an infeasible restriction is imposed on the time step size.\newline\indent
The \emph{Domain of Dependence} (DoD) stabilization allows for a time step size based on the underlying Cartesian mesh. Being an extension of a discontinuous Galerkin (DG) method,  one would expect similar accuracy properties as in the pure DG case. While numerical results do support this expectation, on the analytical level this has only been investigated thoroughly for $k=0$.\newline\indent
Error analysis typically hinges on a consistency result. In this contribution we prove such a result for the DoD stabilization given an arbitrary polynomial degree and an exact solution of sufficient regularity. This in turn could open the way towards a more refined analysis of the method even in the high-order case.}

\section{Introduction}

Meshing of complex geometries for numerical simulations poses a significant challenge. High-accuracy requirements on the requested mesh can impose serious costs when unstructured representations are used. Cartesian cut-cell meshes provide a similar level of accuracy by embedding objects of interest in a cartesian mesh and simply cutting them out, leading to a more efficient meshing procedure. However, this does not come for free, since the cutting process gives up most guaranties on mesh cell properties such as shape regularity and quasi-uniformity. Consequently, numerical methods on cut-cell meshes need to handle cells that are arbitrarily small or highly anisotropic in a special way. In the context of hyperbolic problems explicit time-stepping methods are often employed, where tiny cut-cells lead to prohibitively small time steps, rendering classical scheme infeasible. This is often called the \textit{small cell problem}.

There are essentially two approaches to overcome the \textit{small cell problem}. One option is to adjust the mesh and merge small cells with large neighbors. This is often called \textit{cell merging}/\textit{cell agglomeration} and is for example investigated in \cite{cell-merging-1, cell-merging-2, cell-merging-3, cell-merging-4}.

The alternative is to develop schemes that allow a time step size based on the original cartesian mesh even in the presence of small cut cells. Over the years a variety of such schemes have emerged, often based on a finite volume (FV) or discontinuous Galerkin (DG) scheme that are extended by additional stabilization terms. Examples are the flux redistribution method~\cite{flux-redist-2}, the \textit{h-box}~method~\cite{h-box-1, h-box-2}, dimensional-split schemes~\cite{dim-split-1, dim-split-2}, \textit{mixed~implicit~explicit} schemes~\cite{mixed-exp-imp}, an \textit{active~flux} method on cut cells~\cite{active-cut}, the \textit{state~redistribution} method~\cite{srd-1, srd-2}, the \textit{Domain of Dependence (DoD)} stabilization~\cite{dod-1, dod-2} and \textit{ghost-cell} methods in the hyperbolic context~\cite{ghost-cell-1, ghost-cell-2}.

Proving error estimates for these methods is delicate, one reason being the often very technical constructions due to missing geometric guaranties on the cut-cells. To our knowledge, in the hyperbolic case on cut-cell meshes such error analysis is mostly restricted to 1D,~e.g.~\cite{h-box-1, dim-split-2, mixed-exp-imp-2, ghost-cell-1, ghost-cell-2}. In \cite{dod-error-estimate} the authors were able to prove a two-dimensional result for the linear advection equation based on the DoD stabilization, although only for a first-order method. Unfortunately, the consistency result, underlying the final proof in \cite{dod-error-estimate} does not allow a straightforward generalization to high-order discretizations.

Our goal in this work is not to prove a complete error estimate but rather to focus on a generalized consistency result, extending to high-order schemes, under the assumption of sufficient regularity.
%An exploitation for the error analysis in the high-order case will be part of future research.

The article is organized as follows: We will first state the (linear) PDEs that we are considering in this work and introduce some mesh related terms and the unstabilized DG scheme. Then we will define {so called extension operators, one of the key building-blocks of the DoD stabilization,}
on the discrete space and derive their extension onto the analytical solution space. Finally we give consistency results for two different versions of the DoD stabilization.
\section{Preliminaries}

%\subsection{General PDE Setting}
We consider a general first-order linear (symmetric) hyperbolic system of conservation-form
\begin{subequations}
\label{eq:problem}
	\begin{alignat}{3}
		\label{eq: general pde}\partial_t u + A_1 \partial_x u + A_2 \partial_y u = \partial_t u + \nabla \cdot f(u) & = 0 & \quad & \text{in } \Omega \times (0,T),\\ %\Omega \subset \mathbb{R}^2,\\
		\label{eq: wall boundary condition} Eu & = 0 & & \text{on } \partial \Omega,\\
		u & = u_0 & & \text{on } \Omega \text{ at } t=0,%\Omega \times \{ t = 0 \},
	\end{alignat}
\end{subequations}
where $\Omega \subset \mathbb{R}^2$ is open and bounded, \tlap{$u = (u_1, \ldots, u_m)^T$}, the flux function $f$ is given as
\begin{align}
	\label{eq:physical-flux}
	f(u) &= \begin{pmatrix}f_1(u)\\f_2(u)\end{pmatrix} 
	= \begin{pmatrix}A_1 u\\ A_2 u\end{pmatrix}, \quad A_1, A_2 \in \mathbb{R}^{m\times m},
\end{align}
and $E = E(x)$ is a matrix-valued boundary field to enforce boundary conditions.
In this work we considered two specific examples
\begin{enumerate}%[wide]
	\item The linear advection equation where we pick
	\begin{equation} \label{eq: advection eq}
		m = 1, \; \begin{pmatrix}
			A_1\\A_2
		\end{pmatrix} = \beta \in \mathbb{R}^2, \; E = \beta \cdot n - |\beta \cdot n| .
	\end{equation}
	In this case $E$ enforces a zero-inflow condition on the inflow boundary.
    On the outflow boundary now condition is enforced.
	\item The linear acoustics equation where we pick
	\begin{equation} \label{eq: wave eq}
		m = 3, \; A_1 = {\footnotesize\left(\begin{array}{@{}c@{~\,}c@{~\,}c@{}}
				0 & c & 0\\
				c & 0 & 0\\
				0 & 0 & 0
			\end{array}\right)}, \;
		A_2 = {\footnotesize\left(\begin{array}{@{}c@{~\,}c@{~\,}c@{}}
				0 & 0 & c\\
				0 & 0 & 0\\
				c & 0 & 0
			\end{array}\right)}, \; E = {\footnotesize\left(\begin{array}{@{}c@{~\,}c@{~\,}c@{}}
			0 & n_1 & n_2 \\
			0 & 0 & 0\\
			0 & 0 & 0
			\end{array}\right)},
	\end{equation}
			where $c > 0$ denotes the speed of sound. Let~\tlap{$u = (p, v_1, v_2)^T$} be a state of this equation, $p$ being the pressure and $v$ the velocity. Then \tlap{$E u = (v \cdot n, 0, 0^T = 0$} enforces a reflecting wall boundary condition.
\end{enumerate}
\vspace*{-1em}

\subsection{Base discretization}
The final method is based on a classical DG discretization and is augmented with an additional stabilization.
We start with a rectangular domain $\widehat{\Omega} \supset \Omega$ and a so-called background mesh $\widehat{\mathcal{M}}_h$ of $\widehat{\Omega}$, a structured Cartesian mesh
of mesh size $h$,
such that $\bigcup_{\widehat{E} \in \widehat{\mathcal{M}}_h} \overline{\widehat{E}} = \overline{\widehat{\Omega}}$.
Based on this we construct a cut-cell mesh on $\Omega$ consisting of elements
\[
\mathcal{M}_h = \{ E = \widehat{E} \cap \Omega : \widehat{E} \in \widehat{\mathcal{M}}_h \},
\]
on which we define a discontinuous Galerkin space
\[
\dspace \coloneqq \{ v_h \in (L^2(\Omega))^m \; \big| \; \forall E \in \mathcal{M}_h : (v_h)_E \coloneqq (v_h)_{|E} \in P^r(E)^m \}.
\]
To formulate our numerical schemes on this space we will need additional notation. Let the set of internal and boundary faces be given by
\begin{align*}
	\GammaInt & \coloneqq \{ \gamma = \bar{E}_1 \cap \bar{E}_2 : |\gamma| > 0, E_1, E_2 \in \mathcal{M}_h \},\\
	\GammaExt & \coloneqq \{ \gamma = \bar{E} \cap \partial \Omega : E \in \mathcal{M}_h \}, \quad \Gamma_h \coloneqq \GammaInt \cup \GammaExt,
\end{align*}
where $|\gamma|$ denotes the 1D volume (length).
Given an element $E \in \Mesh$ we denote its sets of
internal, boundary, and all faces by
\begin{gather*}
	\GammaInt(E) \coloneqq \{ \gamma \in \GammaInt : \gamma \cap \partial E \neq \emptyset \}, \quad \GammaExt(E) \coloneqq \{ \gamma \in \GammaExt : \gamma \cap \partial E \neq \emptyset \}, \\%\quad
	\text{and} \quad \Gamma_h(E) \coloneqq \GammaInt(E) \cup \GammaExt(E).
\end{gather*}
We further define the index-set $\mathbb{I}(E) = \{1, \ldots, |\Gamma_h(E)| \}$, consecutively numbering all faces, such that any $i \in \mathbb{I}(E)$ identifies exactly one face $\gamma_i \in \Gamma_h(E)$. If $\gamma_i \in \GammaInt(E)$ is an internal face we denote by $E_i$ the corresponding neighbor element. If $\gamma_i \in \GammaExt(E)$ and there is no physical neighbor at this face, we will abuse notation by still writing $E_i$ and using this as an index variable later.

For each internal face $\gamma = \bar{E}_1 \cap \bar{E}_2 \in \GammaInt$ we fix a unique left-sided cell $E_{\gamma}^L \in \{E_1, E_2 \}$. The corresponding right-sided cell will be denoted by $E_{\gamma}^R$. 
The unit normal vector $n_{\gamma}$ on $\gamma$ is uniquely defined to point from left to right, i.e.,~$n_{\gamma} = n_{E_{\gamma}^L} |_{\gamma}$.

Given a face $\gamma = \bar{E}_{\gamma}^L \cap \bar{E}_{\gamma}^R \in \GammaInt$ and a function $w$ defined on $E_{\gamma}^L \cup E_{\gamma}^R$ such that both $w|_{E_{\gamma}^L}$ and $w|_{E_{\gamma}^R}$ have well-defined traces on $\gamma$, we define left and right values of $w$ as
\[
w_{\gamma}^L \coloneqq \trace{w|_{E_{\gamma}^L}}, \; w_{\gamma}^R \coloneqq \trace{w|_{E_{\gamma}^R}},
\]
and averages and jumps
\[
\avg{w}_{\gamma} \coloneqq \frac{1}{2} (w^L_{\gamma} + w^R_{\gamma}), \quad \llbracket w \rrbracket_{\gamma} \coloneqq w^L_{\gamma} - w^R_{\gamma}.
\]

In the DG discretization of the wave equation, the reflecting boundary condition $Eu = v \cdot n = 0$ in~\eqref{eq: wave eq} is implemented weakly by reflecting state arguments of the numerical flux. Furthermore, the DoD stabilization requires a reflection of polynomial functions. We introduce different operators to handle both, abusing notation by using $\mathfrak M_n$ to denote anyone of them.
\begin{definition}[Mirroring operator]	
	Let a cell $E \in \Mesh$ and a boundary face $\gamma \in \GammaExt(E)$ be given.
	For a state $u=(p,v)^T$ we define the state mirrored at $\gamma$ component wise as
	\begin{equation} \label{eq: reflecting wall boundary operator}
		\mathfrak{M}_n : \mathbb{R}^3 \to \mathbb{R}^3,
		\quad
		\begin{pmatrix}
			p \\ v
		\end{pmatrix} \mapsto 
		\begin{pmatrix}
			p \\ v - 2 (v \cdot n) n
		\end{pmatrix}.
	\end{equation}
	
	Furthermore we denote the orthogonal projection of $x \in E$ onto $\gamma$ as $x^\perp$ and
    %$n_\gamma^\perp$ 
    $n$ %=(x^\perp - x) / \|x^\perp - x\|$ 
    as the unit outer normal in $x^\perp$.
	With this we define the generalized mirroring operator
	\begin{equation}\label{eq: generalized reflecting wall boundary operator}
		\mathfrak{M}_n : \mathcal{P}^r(E)^3 \to \mathcal{P}^r(E)^3,
		\quad
		\bigg( x \mapsto
		\begin{pmatrix}
			p(x) \\ v(x)
		\end{pmatrix} \bigg) \mapsto 
		\bigg(x \mapsto
		\begin{pmatrix}
			p(x) \\ v(x) - 2 (v(x^\perp) \cdot n) n
		\end{pmatrix}\bigg),
	\end{equation}
 %   \JGcomment{Warum wird in (6) nicht $n_\gamma^\perp$ benutzt?\\CE: es wird später nie wieder genutzt, sollen wir nicht einfach direkt $n$ als Normale in $x^\perp$ definieren?\\GB: Ja\\
 %   JG: Aus meiner Sicht steht hier die Umsetzung aus. Oben wird ein $n_\gamma^\perp$ eingeführt und dann nicht genutzt.\\
  %  CE: Ich habe jetzt nur $n$ genutzt und oben als entsprechenden Normalenvektor eingeführt. Das präzisere $n_\gamma^\perp$ sprengt das Layout und ich würde sagen es jetzt konsistent und ausreichend klar.}
	which allows to evaluate $\mathfrak M_n(u)(x) ~\forall x\in E$. Note that this definition coincides on $\gamma$ with the definition of mirroring the state \eqref{eq: reflecting wall boundary operator}.
\end{definition}
Let $u^\mu, u^\nu \in \mathbb{R}^m$ be two states. We denote by $f^S_n(u^\mu, u^\nu) \coloneqq \frac{1}{2} [f_n(u^\mu) + f_n(u^\nu)]$
the central flux and by $S_n(u^\mu, u^\nu)$ a dissipative flux that is consistent, i. e. $S_n(u, u) = 0$. For the linear advection equation~\eqref{eq: advection eq} we will always pick $S_n(u^\mu, u^\nu) = \frac{|\beta \cdot n|}{2}(u^\mu - u^\nu)$, leading to an upwind discretization. For the linear wave equation~\eqref{eq: wave eq} we can, for example, pick $S_n(u^\mu, u^\nu) = \frac{c}{2}(u^\mu - u^\nu)$.

Our base scheme reads: Find $u_h(t) \in \dspace$ such that
\begin{equation}\label{eq: base scheme}
	\int_{L^2(\Omega)} \langle \partial_t u_h(t), w_h \rangle  + a_h(u_h(t), w_h) = 0 \quad \forall w_h \in \dspace,
\end{equation}
where for the advection equation~\eqref{eq: advection eq}, with 
$\langle \cdot, \cdot \rangle$ the euclidian innre product for state vectors,
\begin{align} \label{eq:base-bilinear-form advection eq}
	\begin{split}
		a_h(u_h, w_h) = & - \sum_{E \in \mathcal{M}_h} \int_E f(u_h) \cdot \nabla w_h + \sum_{\gamma \in \GammaInt} \int_{\gamma} [f_n^S + S_n]  (u_{\gamma}^L, u_{\gamma}^R) \llbracket w_h \rrbracket\\
		& + \sum_{\gamma \in \GammaExt} \int_{\gamma} (\beta \cdot n)^+ u_h w_h,
	\end{split}
\end{align}
with $a^+ = \frac{1}{2} (|a|- a)$ beeing the positive part of a real number,
and for the wave equation~\eqref{eq: wave eq}
\begin{align} \label{eq:base-bilinear-form wave eq}
\begin{split}
	a_h(u_h, w_h) = & - \sum_{E \in \mathcal{M}_h} \int_E f(u_h) \cdot \nabla w_h + \sum_{\gamma \in \GammaInt} \int_{\gamma} \langle [f_n^S + S_n]  (u_{\gamma}^L, u_{\gamma}^R), \llbracket w_h \rrbracket \rangle\\
	& + \sum_{\gamma \in \GammaExt} \int_{\gamma} \langle [f_n^S + S_n] (u_h, \mathfrak{M}_n(u_h)) , w_h \rangle.
\end{split}
\end{align}
Consistency of the base scheme is discussed, for example, in~\cite{error-estimate-2} and~\cite{error-estimate-1}.
\section{Extension operators}
One of the primary tools of the DoD method, introduced in \cite{dod-1}, is the concept of extension operators:
\begin{definition}[Extension operator] 
	Given a discrete function $w_h \in \dspace$ and a cell ${E \in \mathcal{M}_h}$ we introduce the extension operator
	\[
	\extop{E}: \dspace \to (P^r(\Omega))^m, \quad w_h \mapsto \extop{E}(w_h)
	\]
	defined via $\extop{E}(w_h)(x) = w_h|_{E}(x), \; x \in E$, i.e., we select the polynomial components of $w_h$ on $E$ and extend them to the complete domain. This definition is well-posed since defining a polynomial function on an open domain is equivalent to defining it on the complete real space.
\end{definition}
\begin{definition}[Reflected extension operator]
	Let $w_h \in \dspace$ be a discrete function, $E \in \mathcal{M}_h$ an element and $\gamma \in \GammaExt$ a boundary face on which a reflecting wall boundary condition is applied.
	We define the extension operator over this face $\gamma$ as
\begin{equation}
	\label{eq:mirroring-extension-op}
	\mathcal{L}_{E}^{\gamma}: \dspace \to P^r(\Omega)^3,
	\quad w_h \mapsto \mathcal{L}_{E}^{\gamma}(w_h) = \mathfrak{M}_{n_{\gamma}}(\extop{E}(w_h)).
\end{equation}
\end{definition}
From now on we will assume that the solution $u$ of equation~\eqref{eq:problem} lies in the space $V = H^{r+1}(\Omega)$ where $r$ is the polynomial degree of our discrete space $\dspace$. This regularity assumption will be crucial for our result. We further introduce the space
\begin{equation}
	V^{\mathfrak{M}} = \{ u \in V^3 : \: \globaltrace u - \mathfrak{M}_n(\globaltrace u) = 0 \text{ on } \partial \Omega \}
\end{equation}
%\JGcomment{Formal ist $\mathfrak{M}_{n_{\gamma}}$ bisher nur als Operator auf Räumen von Polynomen definiert.\\GB: Hab einen kurzen Satz eingeführt\\
%JG: Das löst mein Problem nicht.  Weder $u$ noch $\globaltrace u$ ist ein Polynom...\\
%CE: In (5) ist $\mathfrak{M}_{n_{\gamma}}$ zunächst als Abbildung auf dem State eingeführt worden.\\
%JG: Okay, dann verstehe ich es.\\
%CE: Aber man sollte es klarer herausstellen, ich habe jetzt nochmal explizit auf (5) verwiesen.}
which is the solution space to equation~\eqref{eq: wave eq}. {The above application of the mirroring operator has to be understood pointwise, i. e. $\mathfrak{M}_n(\globaltrace u)(x) = \mathfrak{M}_n(\globaltrace u(x))$, see \eqref{eq: reflecting wall boundary operator}, and the equality in the sense of traces.}

An important aspect of the proofs given in \cite{error-estimate-1} and \cite{error-estimate-2} is that the considered bilinear forms accept values in the space $V + \dspace$ in the first argument. This means that we 
% will have to extend the extension operators defined above to this space 
have to generalize the definition of the extension operator to this space
(or $V^{\mathfrak{M}} + \dspace$ in case of equation~\eqref{eq: wave eq}). This 
is %will be 
achieved by %constructing 
mappings of the form
\begin{subequations}
	\begin{align}
			\text{id} + \extop{E} & : V + \dspace \to V + \polynspace,\\
			\text{id} + \extop{E}^\gamma & : V^{\mathfrak{M}} + \dspace \to V^{\mathfrak{M}}  + \polynspace
	\end{align}
\end{subequations}
 where $\iden$ is the identity mapping on either $V$ or $ V^{\mathfrak{M}}$. 
Existence and uniqueness of such mappings is given by the {gluing Lemma of linear maps} which we state here in the context of general function spaces:
%The existence (and uniqueness of such mappings) is related to the intersection of the domains:
%\textcolor{orange}{1. Hier fehlt doch noch ein wenig, ich würde von der Notation irgendie noch den Raum $X := \operatorname{span}(U, W)$ benennen und evntl. direkt schon $U, W$ als subsapces benennen. 2. braucht man m.M.n. nicht unbedingt ein Zitat, da es ja eigentlich klassische Algebra ist, oder?! ... wir sollten es umformulieren und den Beweis weglassen.}
\begin{lemma} \label{lem: sum mapping}
	Let $X$ and $T$ be vector spaces, $U, W \subset X$ subspaces, $f: U \to T$ and $g: W \to T$ be linear maps. There is a unique linear map
	\begin{displaymath}
		f + g: U + W \to T
	\end{displaymath}
	such that $(f + g)_{|U} = f$ and $(f + g)_{|W} = g$ if and only if $f_{|U \cap W} = g_{|U \cap W}$, that is the two maps $f$ and $g$ agree on the intersection of $U$ and $W$.
\end{lemma}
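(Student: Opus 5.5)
The plan is to prove both directions directly from the definition of linear maps. The only real content sits in showing that the glued map is well-defined.

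\emph{Necessity.} Suppose a linear map $h = f+g : U+W \to T$ exists with $h_{|U} = f$ and $h_{|W} = g$. Take $x \in U \cap W$. Then $f(x) = h(x) = g(x)$, because $x$ lies in both $U$ and $W$. Hence $f_{|U\cap W} = g_{|U\cap W}$.

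\emph{Sufficiency and well-definedness.} Assume $f$ and $g$ agree on $U\cap W$. Every $x \in U+W$ can be written as $x = u + w$ with $u\in U$ and $w \in W$, so I define $h(x) \coloneqq f(u) + g(w)$. The main obstacle is that this decomposition is not unique. Suppose $u+w = u'+w'$. Then
\[
d \coloneqq u-u' = w'-w
\]
lies in $U\cap W$. By linearity and the hypothesis,
\[
f(u)-f(u') = f(d) = g(d) = g(w')-g(w).
\]
Rearranging gives $f(u)+g(w) = f(u')+g(w')$, so $h$ is well-defined.

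\emph{Linearity and restrictions.} Linearity follows by choosing decompositions. If $x=u+w$ and $y=u'+w'$, then $\lambda x + y = (\lambda u+u') + (\lambda w + w')$ is an admissible decomposition. Well-definedness allows us to evaluate $h$ on it, giving $h(\lambda x+y) = \lambda h(x) + h(y)$ by linearity of $f$ and $g$. For the restrictions:
\begin{itemize}
\item for $u\in U$, use the decomposition $u = u + 0$, so $h(u) = f(u) + g(0) = f(u)$;
\item for $w \in W$, use $w = 0 + w$, so $h(w) = f(0) + g(w) = g(w)$.
\end{itemize}

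\emph{Uniqueness.} Let $\tilde h$ be another linear map with the same restrictions. For $x = u+w$ we get $\tilde h(x) = \tilde h(u)+\tilde h(w) = f(u)+g(w) = h(x)$. Uniqueness therefore needs only linearity together with the fact that $U+W$ is spanned by $U \cup W$.

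In the paper's application, $f=\iden$ on $V$ and $g = \extop{E}$ on $\dspace$. The hypothesis to check later is that $\extop{E}$ acts as the identity on $V\cap\dspace$. That check is a separate matter and is not part of this lemma.
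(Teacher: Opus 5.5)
Your proof is correct and complete. Necessity follows by restricting $h$ to $U\cap W$. Existence follows from $h(u+w) \coloneqq f(u)+g(w)$, which is well-defined because $u-u' = w'-w \in U\cap W$. Uniqueness holds because a linear map on $U+W$ is determined by its values on $U$ and $W$. The paper gives no proof of this lemma and treats it as standard linear algebra; your argument is the standard one that fills it in.
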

%\begin{proof}
%\end{proof}
The other ingredient that we need is a connection between weak and strong differentiability of functions in our DG space. This statement can, for example, be found in~\cite{braess}:
\begin{lemma} \label{lem: braess sobolev}
	Let $k \geq 1$. A function $f  :\bar{\Omega} \to \mathbb{R}$ which is piecewise $C^\infty$ is in $H^k(\Omega)$ if and only if it is in $C^{k-1}(\bar{\Omega})$.
\end{lemma}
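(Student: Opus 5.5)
The plan is to prove the case $k=1$ directly with cellwise integration by parts and then obtain general $k$ by induction on the order of differentiation. Throughout, "piecewise $C^\infty$" is read with respect to a finite partition of $\Omega$ into Lipschitz subdomains $E$ (in our use, the cut cells of $\Mesh$), such that each restriction $f_{|E}$ extends to a function in $C^\infty(\bar E)$. The faces between subdomains are then exactly the sets $\gamma\in\GammaInt$.

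\emph{Case $k=1$.} For $\phi\in C_c^\infty(\Omega)$ and each coordinate direction $j$, I would integrate by parts on every cell and sum:
\[
\int_\Omega f\,\partial_j\phi = -\sum_{E}\int_E (\partial_j f_{|E})\,\phi + \sum_{\gamma\in\GammaInt}\int_\gamma \llbracket f\rrbracket_\gamma\,(n_\gamma)_j\,\phi .
\]
Boundary faces on $\partial\Omega$ give no contribution because $\phi$ has compact support.

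For the direction "$\Leftarrow$", assume $f\in C^0(\bar\Omega)$. Then all jumps vanish, so the weak derivative is the piecewise classical derivative. That derivative is bounded, hence in $L^2$, and therefore $f\in H^1(\Omega)$.

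For the direction "$\Rightarrow$", assume $f\in H^1(\Omega)$ with weak derivative $g_j$. Testing first with $\phi$ supported inside a single cell shows $g_j=\partial_j f_{|E}$ almost everywhere on each $E$. Substituting this back, the face sum must vanish for every $\phi$. Localizing $\phi$ near an interior point of one face $\gamma$, where $n_\gamma$ is defined and some component is nonzero, gives $\llbracket f\rrbracket_\gamma=0$ almost everywhere on $\gamma$. Since the one-sided traces are continuous, the jump vanishes everywhere on $\gamma$. Hence the cellwise continuous extensions agree on all common faces, and $f$ has a continuous representative on $\bar\Omega$.

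\emph{Induction step.} Suppose the claim holds for $k-1\ge 1$. Then
\[
f\in H^k \iff f\in H^1 \text{ and } \partial_j f\in H^{k-1} \text{ for } j=1,2,
\]
where by the $k=1$ case the weak derivatives $\partial_j f$ coincide with the piecewise derivatives. These derivatives are again piecewise $C^\infty$, so by the induction hypothesis $\partial_j f\in H^{k-1}$ holds if and only if $\partial_j f\in C^{k-2}(\bar\Omega)$. Combined with $f\in C^0(\bar\Omega)$, this is equivalent to $f\in C^{k-1}(\bar\Omega)$. For the implication "$C^{k-1}$ gives $H^k$", note that the classical derivatives of a $C^1$ function are continuous and agree with the piecewise ones, so the induction applies in both directions.

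\emph{Main obstacle.} The delicate point is the passage from "jumps vanish on faces" to global continuity at points that lie on no face of positive length. Examples are vertices where several cut cells meet, and cells touching only at isolated points, which are excluded from $\GammaInt$ by the condition $|\gamma|>0$. I would handle vertices by continuity of the one-sided extensions: around a vertex shared by cells connected through a chain of positive-length faces, the cellwise values agree along each face up to the vertex. For purely point contacts, $H^1$ alone does not force continuity in 2D. So I would add the standard mild assumption, as in \cite{braess}, that the partition is face-connected around each vertex. This holds for the Cartesian cut cell meshes considered here, and I would state the assumption explicitly.
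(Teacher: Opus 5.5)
The paper does not prove this lemma itself; it imports it from Braess's textbook. Your argument is essentially the proof given there: Green's formula on each piece for $k=1$, with interior edge terms that cancel exactly when the jumps vanish, and then the $k=1$ case applied to the piecewise derivatives for general $k$.

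One step needs tightening. The jump-localization step only gives $\llbracket f\rrbracket_\gamma=0$ on $\overline{\gamma\cap\Omega}$, not on all of $\gamma$. So you need a hypothesis on $\Omega$ itself, for example that $\Omega$ is Lipschitz and hence lies on one side of $\partial\Omega$; Lipschitz cells are not enough. As a counterexample, take $\Omega=(0,2)\times(0,1)\setminus(\{1\}\times(0,\tfrac12])$ with the two unit squares as cells. A piecewise smooth $f$ can then lie in every $H^k(\Omega)$ yet jump across the slit. The paper's statement has the same tacit gap, since it only assumes $\Omega$ open and bounded.
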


We can now show the following result.
\begin{lemma} \label{lem: intersection of spaces}
	Let $r \geq 0$, then it holds
	\begin{equation}
		V \cap \dspace = \polynspace.
	\end{equation}
\end{lemma}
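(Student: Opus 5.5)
We prove the two inclusions separately. The inclusion $\polynspace \subset V \cap \dspace$ is immediate. A global polynomial $p \in \mathcal{P}^r(\Omega)^m$ restricts on every cut cell $E \in \Mesh$ to an element of $P^r(E)^m$, so $p \in \dspace$. Since $\Omega$ is bounded, $p$ and all its derivatives are bounded on $\Omega$, so $p \in H^{r+1}(\Omega)^m = V$.

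For the reverse inclusion, let $w \in V \cap \dspace$ and work componentwise, so assume $m=1$. Then $w$ is piecewise polynomial, in particular piecewise $C^\infty$ with respect to $\Mesh$. It also lies in $H^{r+1}(\Omega)$. By Lemma~\ref{lem: braess sobolev} with $k = r+1 \geq 1$, $w \in C^{r}(\bar\Omega)$. Now take two neighbouring cells $E_1, E_2$ sharing a face $\gamma = \bar E_1 \cap \bar E_2 \in \GammaInt$, and set $q \coloneqq w|_{E_1} - w|_{E_2}$, viewed as polynomials on $\mathbb{R}^2$ of degree at most $r$.

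\textbf{Key step.} Since $w \in C^r$ across $\gamma$, all derivatives $\partial^\alpha q$ with $|\alpha| \le r$ vanish on $\gamma$. Since $|\gamma|>0$, $\gamma$ contains at least one point $x_0$ (in fact a segment). Expanding $q$ in its Taylor polynomial at $x_0$ gives
\[
q(x) = \sum_{|\alpha|\le r} \frac{\partial^\alpha q(x_0)}{\alpha!}(x-x_0)^\alpha = 0 ,
\]
which is exact because $\deg q \le r$. Hence $q \equiv 0$, that is, $\extop{E_1}(w) = \extop{E_2}(w)$. This step is where the degree matching between $H^{r+1}$ and $P^r$ is used.

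\textbf{Main obstacle: propagating across the mesh.} We must pass from equality on neighbours to a single global polynomial. Build the graph whose vertices are the cells of $\Mesh$ and whose edges are the faces in $\GammaInt$. If this graph is connected, chaining the key step along paths shows that all $\extop{E}(w)$ coincide with one polynomial $p$, and then $w = p$ a.e. on $\Omega$. Connectedness is the delicate point. It would follow from assuming $\Omega$ connected (a domain): then $\Omega$ minus the one-dimensional skeleton, with the finitely many vertices where neighbours touch only in points removed, stays connected through faces of positive length. One subtlety is a cell $E = \widehat E \cap \Omega$ that is itself disconnected; even then each component is a nonempty open set, and the key step applied to a face touching that component fixes the whole polynomial $w|_E$. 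If $\Omega$ is not connected, the statement can only hold componentwise. In that case I would note that the implicit assumption is that $\Omega$ is connected, which is the setting in which the lemma is used.
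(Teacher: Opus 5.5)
Your proof is correct and takes the same route as the paper: Lemma~\ref{lem: braess sobolev} with $k=r+1$ gives $C^r$-regularity, and a piecewise polynomial of degree $r$ with $r$ continuous derivatives must be a single global polynomial. You also supply what the paper leaves implicit: the easy inclusion, the Taylor-expansion argument across a face, and the chaining over the cell-adjacency graph, which rightly exposes an unstated connectedness assumption on $\Omega$.
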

\begin{proof}
	Let $u \in V \cap \dspace$. Since $u \in \dspace$ we know that $u$ is defined on $\bar{\Omega}$ and is piecewise $C^\infty$. Since $u \in V = H^{r+1} (\Omega)$ we deduce from Lemma~\ref{lem: braess sobolev} that $u \in C^r(\Omega)$. Since $u$ is a piecewise polynomial of degree $r$ and has $r$ continuous derivatives, we have $u \in \polynspace$.
\end{proof}

\begin{corollary} \label{cor: extop on sum space}
Let $E \in \Mesh$ be a mesh cell. Then the mapping
\begin{equation}
\iden + \extop{E} : V + \dspace \to V + \polynspace
\end{equation}
is well-defined. If $\gamma \in \GammaExt$ is a boundary face and $u$ is a solution to equation~\eqref{eq: wave eq}, the mapping
\begin{equation}
\iden + \extop{E}^\gamma : V^{\mathfrak{M}} + \dspace \to V^\mathfrak{M} + \polynspace
\end{equation}
is well-defined.
\end{corollary}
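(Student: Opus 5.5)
The plan is to apply Lemma~\ref{lem: sum mapping} with $X$ a suitable ambient function space, for example $(L^2(\Omega))^m$ (or $(L^2(\Omega))^3$ for the wave equation), and with $T = V + \polynspace$. In the first case we take $U = V$, $W = \dspace$, $f = \iden$ and $g = \extop{E}$. Both maps are linear: $\iden$ trivially, and $\extop{E}$ because restriction to $E$ and polynomial extension are linear. Their images lie in $T$, since $V \subset T$ and $\polynspace \subset T$. By the lemma, the only thing to check is that $f$ and $g$ agree on $U \cap W$.

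By Lemma~\ref{lem: intersection of spaces}, $V \cap \dspace = \polynspace$, so let $w$ be a global polynomial in $\polynspace$. Then $w|_E$ is the restriction of this polynomial, and its unique polynomial extension is $w$ itself. Hence $\extop{E}(w) = w = \iden(w)$, and the gluing lemma yields a unique linear map $\iden + \extop{E}$, which is therefore well-defined.

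For the reflected operator we take $U = V^{\mathfrak{M}}$, $W = \dspace$ (with $m=3$), $f = \iden$, $g = \extop{E}^\gamma$ and $T = V^{\mathfrak{M}} + \polynspace$. The key step is again to identify the intersection. Since $V^{\mathfrak{M}} \subset V^3$, Lemma~\ref{lem: intersection of spaces} gives
\[
V^{\mathfrak{M}} \cap \dspace \subset \polynspace \cap V^{\mathfrak{M}},
\]
so every element $w$ of the intersection is a global polynomial satisfying $w = \mathfrak{M}_n(w)$ pointwise on $\partial\Omega$. We must then show that $\extop{E}^\gamma(w) = \mathfrak{M}_{n_\gamma}(w)$ equals $w$. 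By \eqref{eq: generalized reflecting wall boundary operator},
\[
\mathfrak{M}_{n_\gamma}(w)(x) = \bigl(p(x),\, v(x) - 2 (v(x^\perp)\cdot n) n\bigr),
\]
and the correction term involves only the boundary value $v(x^\perp)$ with $x^\perp \in \gamma \subset \partial\Omega$. The boundary condition $w = \mathfrak{M}_n(w)$ on $\gamma$ means exactly $v(x^\perp)\cdot n = 0$. So the correction vanishes for every $x$, and $\extop{E}^\gamma(w) = w = \iden(w)$. The gluing lemma then gives the second map.

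The main obstacle is this last step: making precise that the trace condition defining $V^{\mathfrak{M}}$ (equality in the trace sense, hence almost everywhere on $\partial\Omega$) forces $v(x^\perp)\cdot n = 0$ at every point of $\gamma$. For a polynomial $w$, $v\cdot n$ is continuous along the face, so vanishing almost everywhere on $\gamma$ implies vanishing everywhere on $\gamma$. This settles the argument, given that the projection $x^\perp$ and the normal $n$ are those of the face $\gamma$ used in the definition. A minor point to note is that the codomain of $\extop{E}^\gamma$ must lie in $T$, which holds because $\mathfrak{M}_{n_\gamma}$ maps polynomials to polynomials.
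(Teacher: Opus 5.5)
Your proposal is correct and follows the paper's proof. You apply Lemma~\ref{lem: sum mapping} after identifying the intersection via Lemma~\ref{lem: intersection of spaces}, and check that $\extop{E}$ and $\extop{E}^\gamma$ act as the identity there. Your explicit check that the correction term $2(v(x^\perp)\cdot n)n$ vanishes, because $v\cdot n=0$ holds pointwise on $\gamma$ by continuity of polynomials, fills in a step the paper only asserts.
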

\begin{proof}
By Lemma~\ref{lem: intersection of spaces} and the fact that $\extop{E}$ is the identity on $\polynspace$ the first statement holds true by Lemma~\ref{lem: sum mapping}. For the second statement we first note that by Lemma~\ref{lem: intersection of spaces}
\begin{displaymath}
	V^\mathfrak{M} \cap \dspace = \{ u \in  \polynspace : u - \mathfrak{M}_n(u) = 0 \text{ on } \partial \Omega \}.
\end{displaymath}
Again we have that $\extop{E}^\gamma$ is the identity on $V^\mathfrak{M} \cap \dspace$ and we can apply Lemma~\ref{lem: sum mapping} again to get the second statement.
\end{proof}

\section{Consistency of the Domain of Dependence stabilization}

The DoD stabilization adds penalty terms on small cut cells to the base scheme~\eqref{eq: base scheme}. The general form is: Find $u_h(t) \in \dspace$ such that for all $w_h \in \dspace$
\begin{equation}
	\label{eq: stabilized scheme}
	\int_{\Omega} \langle \partial_t u_h(t), w_h \rangle  + a_h(u_h(t), w_h) + J_h(u_h(t), w_h) = 0.
\end{equation}
Small cells that require stabilization will be collected in a subset $\cI \subset \Mesh$. The (bilinear) form $J_h(u_h(t), w_h) = \sum_{E \in \cI} J^E_h(u_h(t), w_h)$ is defined cell-wise. Concrete cell-stabilization terms will be introduced below. Assuming an already consistent base scheme it is enough to show that
\begin{equation}
	J^E_h(u(t), w_h) = 0 \quad \forall w_h \in \dspace
\end{equation}
to prove consistency of the stabilized scheme.

\subsection{Linear advection equation}

To illustrate the application of Corollary~\ref{cor: extop on sum space} we first consider the DoD stabilization that was given in~\cite{dod-icosahom} for the linear advection equation \eqref{eq: advection eq}. This formulation assumes that for any small cut cell $E$ there is exactly one inflow face $E_{in}$ and is given as:

\begin{definition}
Let $E \in \cI$ a small cut cell, $E_{\text{in}}$ its only inflow face, $\eta_E \in [0, 1]$.
The cell stabilization term for equation~\eqref{eq: advection eq} is $J^E_h(u_h, w_h) = [J^{0, E}_h + J^{1, E}_h](u_h, w_h)$ with
\begin{subequations}
	\begin{align}
		J^{0, E}_h(u_h, w_h) & = \eta_E \int_{\partial E}  ( \beta \cdot n)^+ (\extop{E_{in}}(u_h) - \extop{E}(u_h)) \llbracket w_h \rrbracket,\\
		J^{1, E}_h(u_h, w_h) & = \eta_E \int_{ E}   (\extop{E_{in}}(u_h) - u_h)  \beta \cdot \nabla (\extop{E_{in}}(w_h) - w_h).
	\end{align}
\end{subequations}
\end{definition}

\begin{proposition}
	Let $u$ be the exact solution to equation~\eqref{eq: advection eq}. Then
	\[
	 J_h^{E} (u, w_h) = 0 \quad \forall E \in \cI \text{ and } \forall w_h \in \dspace.
	\]
\end{proposition}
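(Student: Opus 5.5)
The plan is to interpret $J_h^E(u,w_h)$ through the extended operators of Corollary~\ref{cor: extop on sum space}: for $u\in V$ one sets $(\iden+\extop{E})(u)=u$ and $(\iden+\extop{E_{in}})(u)=u$. This is the only definition compatible with linearity on $V+\dspace$, because $\iden+\extop{E}$ restricted to $V$ is the identity. Lemma~\ref{lem: sum mapping} is what makes this a consistent choice. It applies because, by Lemma~\ref{lem: intersection of spaces}, the identity on $V$ and $\extop{E}$ on $\dspace$ agree on the intersection $V\cap\dspace=\polynspace$, where $\extop{E}$ acts as the identity. Here $\extop{E_{in}}$ stands for the extension from the neighbouring cell across the inflow face, and the corollary applies to it in the same way.

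With this convention we substitute $u_h=u$ into both stabilization terms. In $J^{0,E}_h$ the first argument enters only through $\extop{E_{in}}(u)-\extop{E}(u)=u-u=0$. The integrand therefore vanishes pointwise on $\partial E$ for every $w_h$. The traces are well defined since $u\in H^{r+1}(\Omega)$ with $r\ge 0$, so $u$ has an $L^2$ trace on $\partial E$, and $\llbracket w_h\rrbracket$ is a piecewise polynomial. In $J^{1,E}_h$ the first factor is $\extop{E_{in}}(u)-u$, where the second $u$ is the plain restriction to $E$, that is, the identity part. This factor is $u-u=0$ in $L^2(E)$. The second factor $\beta\cdot\nabla(\extop{E_{in}}(w_h)-w_h)$ is a polynomial on $E$ and hence bounded, so the integral is zero. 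Summing gives $J^E_h(u,w_h)=0$ for all $E\in\cI$ and all $w_h\in\dspace$, and the same reasoning applies to each small cell separately.

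The step that needs care is the first one: justifying that the extension operators, defined originally only on $\dspace$, act as the identity on the exact solution. The danger is that the linear extension to $V+\dspace$ might not exist, or might not be unique, if $V$ and $\dspace$ shared functions on which $\extop{E}$ differs from the identity. Corollary~\ref{cor: extop on sum space} excludes this, because the intersection consists only of global polynomials, and the regularity $u\in H^{r+1}$ is exactly what Lemma~\ref{lem: braess sobolev} needs to force this. Once that is settled, the remaining argument is direct substitution. One should also note that $J^E_h$ is linear in its first argument, so that the stabilized form on $V+\dspace$ is well defined and Galerkin orthogonality can later be used in the error analysis.
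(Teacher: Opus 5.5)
Your proof is correct and matches the paper's argument. Corollary~\ref{cor: extop on sum space}, obtained from the gluing lemma together with $V\cap\dspace=\polynspace$, gives $\extop{E_{in}}(u)=\extop{E}(u)=u$, so the first-argument factor in both $J^{0,E}_h(u,w_h)$ and $J^{1,E}_h(u,w_h)$ becomes $u-u=0$; your remarks on traces and on the boundedness of $\beta\cdot\nabla(\extop{E_{in}}(w_h)-w_h)$ only spell out details the paper leaves implicit.
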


\begin{proof}
By Corollary~\ref{cor: extop on sum space} we know that $\extop{E_{in}}(u) = \extop{E}(u) = u$. Thus we have
\[
J^{0, E}_h(u, w_h) = \eta_E \int_{\partial E}  ( \beta \cdot n)^+ (u- u) \llbracket w_h \rrbracket = 0 \quad \forall w_h \in \dspace,
\]
and similarly for $J^{1, E}_h(u, w_h)$.
\end{proof}

\subsection{Linear wave equation}

The proof of consistency for the DoD stabilization for the wave equation, c.f.~\cite{dod-wave-eq}, will follow along the same lines. The formulation of the stabilization terms is more involved and we will recall the necessary definitions.

\begin{definition}[Propagation forms] \label{def: propagation forms}
	Let $E \in \cI$ be a small cell {and 
    $\mathbb{I}(E)=\{1,\ldots,K\}$ the set of face indices of $E$}. We introduce a set of surface and volume propagation forms% as functionals
	\begin{align*}
		p_{ij}^E(u^{\mu}, u^{\nu}, w) & : (P^r(E))^3 \times (P^r(E))^3 \times (P^r(E))^3 \to \mathbb{R}, \; i, j \in \IE, i \neq j,\\
		p_V^E(u^{\mu}, u^{\nu}, w) & : (P^r(E))^3 \times (P^r(E))^3 \times (P^r(E))^3 \to \mathbb{R},\\
		p_V^{E, *}(u^{\mu}, u^{\nu}, w) & : (P^r(E))^3 \times (P^r(E))^3 \times (P^r(E))^3 \to \mathbb{R}.
	\end{align*}
  These forms are required to be symmetric in the first two arguments and linear in the third. They must further satisfy for all vector-valued polynomial functions $u^{\mu}, u^{\nu}, w \in \mathcal{P}^r(E)^3$ the following properties:
	\begin{enumerate}[i),labelindent=0pt,listparindent=0pt,align=left,leftmargin=0pt,labelwidth=0pt,itemindent=!]
		\begin{subequations}
			\item Let $i, j \in \mathbb{I}(E)$. Then it holds
			\begin{equation} \label{eq: propagation balance}
				p_{ij}^E(u^{\mu}, u^{\nu}, w) + p_{ji}^E(u^{\mu}, u^{\nu}, w) = p_V^E(u^{\mu}, u^{\nu}, w) + p_V^{E, *}(u^{\mu}, u^{\nu}, w).
			\end{equation}
			\item \label{lem: propagation forms consistency} Let $j \in \mathbb{I}(E)$. Then it holds
			\begin{equation} \label{eq: propagation forms consistency}
				\sum_{i \in \mathbb{I}(E), i \neq j} p_{ij}^E(u^{\mu}, u^{\nu}, w) = \int_{\gamma_j} \tfrac{1}{2} \langle f_n(u^{\mu}) + f_n(u^{\nu}), w \rangle.
			\end{equation}
		\end{subequations}
	\end{enumerate}
\end{definition}

%Compared to~\cite{dod-wave-eq} we will also need to impose the following property on our propagation forms:
In addition, the specific choice of propagation forms in~\cite{dod-wave-eq} satisfies the property
\begin{equation} \label{eq: propagation forms volume consistency}
	p_V^E(u, u, w) = \tfrac{2}{K(K-1)} \int_E f(u) \cdot \nabla w,
\end{equation}
which will be necessary for consistency.
%The examples of propagation forms given in~\cite{dod-wave-eq} satisfy this property.
%\textcolor{red}{Diese Eigenschaft ist nicht für die Energiestabilität wichtig, hier aber schon. Ist mir jetzt erst aufgefallen. Man kann das hier auch weiter abstrahieren, aber ist jetzt zuviel Aufwand.}
%\textcolor{orange}{ist das von der konkreten Wahl in \cite{dod-wave-eq} erfüllt? Einmal die beiden Aussagen umdrehen}

To formulate the cell stabilization we use the following notation: Let $E \in \cI$ be a small cut cell with $\gamma_i \in \Gamma_h(E)$ and $\gamma_j \in \Gamma_h(E)$ being two of its faces, $i, j \in \IE$. Then we denote by
\begin{equation} \label{eq: unified extension operator}
	\extop{\mathcal{E}}^{ij} =
	\begin{cases}
		\extop{\mathcal{E}}
		&\quad\text{if }
		\mathcal{E} = E \text{ or } \mathcal{E} = E_k, k \in \{i, j\} \text{ and } \gamma_k \in \GammaInt\\
		\extop{E_j}^{\gamma_i}
		&\quad\text{if } \mathcal{E} = E_i \text{ and }
		\gamma_i \not \in \GammaInt\\
		\extop{E_i}^{\gamma_j}
		&\quad\text{if } \mathcal{E} = E_j \text{ and }
		\gamma_j \not \in \GammaInt
	\end{cases}
\end{equation}
the extension operator between $\gamma_i$ and $\gamma_j$ that selects either of the previously defined extension operators based on whether we are extending from an internal or external boundary. We abuse notation here by writing $\mathcal{E} = E_i$ even if $\gamma_i \not \in \GammaInt$ for $i \in \IE$. 

\begin{definition}[Cell stabilization terms] {\label{def: cell stabilization terms}}
	Let $E \in \cI$ be a small cut cell. The cell-wise stabilization term $J^E_h = J_{h}^{0,E} + J_{h}^{1,E} + J_{h}^{s,E}$ is constructed from pairwise (for each pair of neighbors $i, j \in \IE, i \neq j$) contributions as follows
	\begin{subequations} \label{eq: dod cell terms}
		\begin{align}
			\label{eq: dod cell term a0}
			\begin{split}
				J^{0, E}_h(u_h, w_h) & = \phantomminus \eta_E \sum_{(i, j) \in \IE^2, \, i < j} J^{0, E}_{h, ij} (u_h, w_h)\\
				& \phantomeq \binminus \eta_E \sum_{\gamma \in \GammaInt(E)} \int_{\gamma} \langle \tfrac{1}{2}[f_n(u^L_{\gamma}) + f_n(u^R_{\gamma})], \llbracket w_h \rrbracket \rangle\\
				& \phantomeq \binminus \eta_E \sum_{\gamma \in \GammaExt(E)} \int_{\gamma} \langle \tfrac{1}{2}[f_n(u_h) + f_n(\mathfrak{M}_n(u_h))], w_h \rangle,
			\end{split}\\
			\label{eq: dod cell term a1}
			J^{1, E}_h(u_h, w_h) & = \phantomminus \eta_E \sum_{(i, j) \in \IE^2, \, i < j} J^{1, E}_{h, ij}(u_h, w_h),\\
			\label{eq: dod cell term s}
			\begin{split}
				J^{s, E}_h(u_h, w_h) & = \phantomminus \eta_E \sum_{(i, j) \in \IE^2, \, i < j} J^{s, E}_{ij}(u_h, w_h)\\
				& \phantomeq \binminus \eta_E \sum_{\gamma \in \GammaInt(E)} \int_{\gamma} \langle S_n(u^L_{\gamma}, u^R_{\gamma}), \llbracket w_h \rrbracket \rangle\\
				& \phantomeq \binminus \eta_E \sum_{\gamma \in \GammaExt(E)} \int_{\gamma} \langle S_n(u_h, \mathfrak{M}_n(u_h)), w_h \rangle.
			\end{split}
		\end{align}
	\end{subequations}
	For the surface contributions $J_{h,ij}^{0,E}$ of $J_{h}^{0,E}$
	we distinguish two cases: (1) two inner faces, i.e. $\gamma_i, \gamma_j \in \GammaInt$, and (2) one face being a reflecting boundary face.
	
	\begin{enumerate}[wide,label=(\arabic*)]
		\item For two inner faces $\gamma_i, \gamma_j \in \GammaInt$, associated with cut-cell neighbors $E_i$ and $E_j$, the surface stabilization term between $E_i$ and $E_j$ is given by
		\begin{equation}
			\begin{split}
				J_{h,ij}^{0,E}(u_h, w_h) & = \phantomplus p_{ij}^E(\extop{E_i}(u_h), \extop{E_j}(u_h), \extop{E}(w_h) - \extop{E_j}(w_h))\\
				& \phantomeq \binplus p_{ji}^E(\extop{E_i}(u_h), \extop{E_j}(u_h), \extop{E}(w_h) - \extop{E_i}(w_h)). 
			\end{split}
		\end{equation}
		
		\item If one of the two faces is a reflecting boundary face, the stabilization needs to be defined separately. W.l.o.g we assume that $\gamma_i \in \GammaExt$ and $\gamma_j \in \GammaInt$. The reflected surface stabilization term between $\gamma_i$ and $\gamma_j$ is then given by
		\begin{equation}
			\begin{split}
				J_{h,ij}^{0,E}(u_h, w_h) & = \phantomplus p_{ij}^E(\extop{E_j}^{\gamma_i}(u_h), \extop{E_j}(u_h), \extop{E}(w_h) - \extop{E_j}(w_h))\\
				& \phantomeq \binplus p_{ji}^E(\extop{E_j}^{\gamma_i}(u_h), \extop{E_j}(u_h), \extop{E}(w_h)). 
			\end{split}
		\end{equation}
	\end{enumerate}
	To define the contribution $J_{h,ij}^{1,E}$ we introduce a weighting factor $\omega_{\mathcal{E}} := \{ -1 \text{ if } \mathcal{E} = E; \frac{1}{2} \text{ else}\}$. With this the volume stabilization is given as
	\begin{equation} \label{eq: dod volume term ij}
		\begin{split}
			J_{h,ij}^{1,E}(u_h, w_h) & = \phantomplus \sum_{\mathcal{E} \in \{E, E_i, E_j\}} \omega_{\mathcal{E}} \big[p_V^E (\extop{E_i}^{ij}(u_h), \extop{E_j}^{ij}(u_h), \extop{\mathcal{E}}^{ij}(w_h))\\
			& \phantomeq \phantomplus \hphantom{\sum_{\mathcal{E} \in \{E, E_i, E_j\}} \omega_{\mathcal{E}} \big[]}- \tfrac{2}{K(K-1)} \int_{E}  f (\extop{\mathcal{E}}^{ij}(u_h)) \cdot \nabla \extop{\mathcal{E}}^{ij}(w_h) \big]\\
			& \phantomeq \binplus \: \sum_{\mathcal{E} \in \{E, E_i, E_j\}} \omega_{\mathcal{E}} p_V^{E,*} (\extop{E_i}^{ij}(w_h), \extop{E_j}^{ij}(w_h), \extop{\mathcal{E}}^{ij}(u_h)).
		\end{split}
	\end{equation}
Finally the contribution $J^{s, E}_{ij}$ to the dissipative stabilization is
\begin{equation} \label{eq: dod neighbor dissipation}
		\begin{split}
			J^{s, E}_{ij}(u_h, w_h) & = \phantomplus \tfrac{1}{6} \sum_{\gamma \in \Gamma_h(E) } \int_{\gamma} \langle S_n(\extop{E_i}^{ij}(u_h),\extop{E_j}^{ij}(u_h)), \extop{E_i}^{ij}(w_h) \mathord{-} \extop{E_j}^{ij}(w_h)) \rangle\\
			& \phantomeq \binplus \tfrac{1}{6} \sum_{\gamma \in \Gamma_h(E)} \int_{\gamma} \langle S_n(\extop{E_j}^{ij}(u_h),\extop{E_i}^{ij}(u_h)), \extop{E_j}^{ij}(w_h) \mathord{-}\extop{E_i}^{ij}(w_h)) \rangle.
		\end{split}
	\end{equation}
	
\end{definition}

\begin{proposition}
	Let $u$ be the exact solution to equation~\eqref{eq: wave eq}. Then
\[
J_h^{E} (u, w_h) = [J^{0, E}_h + J^{1, E}_h + J^{s, E}_h](u, w_h)= 0 \quad \forall E \in \cI \text{ and } \forall w_h \in \dspace.
\]
\end{proposition}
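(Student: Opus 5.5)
We follow the advection case: we substitute the exact solution $u\in V^{\mathfrak{M}}$ into each of $J^{0,E}_h$, $J^{1,E}_h$, $J^{s,E}_h$ and use Corollary~\ref{cor: extop on sum space} to collapse all extensions of $u$ to $u$ itself. For interior neighbors, $\extop{E_i}(u)=\extop{E_j}(u)=\extop{E}(u)=u$. For a reflecting boundary face $\gamma_i$, $\extop{E_j}^{\gamma_i}(u)=u$, because $u-\mathfrak{M}_n(u)=0$ on $\partial\Omega$ by definition of $V^{\mathfrak{M}}$. Hence $\extop{\mathcal{E}}^{ij}(u)=u$ for every $\mathcal{E}\in\{E,E_i,E_j\}$ and all pairs $(i,j)$. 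In particular $u^L_\gamma=u^R_\gamma=u$ on interior faces (since $u\in H^{r+1}$ has a single-valued trace), and $\mathfrak{M}_n(u)=u$ on boundary faces. The argument then proceeds term by term.

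\emph{Dissipative part.} In $J^{s,E}_{ij}(u,w_h)$ every flux is $S_n(u,u)=0$ by consistency of $S_n$. Likewise $S_n(u^L_\gamma,u^R_\gamma)=S_n(u,u)=0$, and $S_n(u,\mathfrak{M}_n(u))=S_n(u,u)=0$ on $\GammaExt(E)$. Thus $J^{s,E}_h(u,w_h)=0$.

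\emph{Volume part.} Since $\sum_{\mathcal{E}}\omega_{\mathcal{E}}=-1+\tfrac12+\tfrac12=0$, the third line of \eqref{eq: dod volume term ij} becomes $p_V^{E,*}(\extop{E_i}^{ij}(w_h),\extop{E_j}^{ij}(w_h),u)\sum_{\mathcal{E}}\omega_{\mathcal{E}}=0$. Here we use linearity of $p_V^{E,*}$ in the third argument only. In the first two lines, property \eqref{eq: propagation forms volume consistency} gives $p_V^E(u,u,\extop{\mathcal{E}}^{ij}(w_h))=\tfrac{2}{K(K-1)}\int_E f(u)\cdot\nabla\extop{\mathcal{E}}^{ij}(w_h)$, which cancels the integral term exactly. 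Hence $J^{1,E}_h(u,w_h)=0$.

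\emph{Surface part.} We expand each $J^{0,E}_{h,ij}(u,w_h)$ using linearity in the third argument. For two interior faces it equals $p_{ij}^E(u,u,\extop{E}w_h-\extop{E_j}w_h)+p_{ji}^E(u,u,\extop{E}w_h-\extop{E_i}w_h)$. For $\gamma_i\in\GammaExt$ the term $\extop{E_i}w_h$ is absent. Summing over all pairs $i<j$, the contributions with third argument $\extop{E}(w_h)$ combine into $\sum_{j}\sum_{i\neq j}p_{ij}^E(u,u,\extop{E}w_h)$. By \eqref{eq: propagation forms consistency} this equals $\sum_j\int_{\gamma_j}\langle f_n(u),w_h|_E\rangle$. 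The contributions with $-\extop{E_j}(w_h)$ for interior $\gamma_j$ collect as $-\sum_{i\neq j}p_{ij}^E(u,u,\extop{E_j}w_h)=-\int_{\gamma_j}\langle f_n(u),w_h|_{E_j}\rangle$. On interior faces, these two pieces combine to $\int_{\gamma}\langle f_n(u),\llbracket w_h\rrbracket\rangle$, up to orientation of $n$. This matches the subtracted term in \eqref{eq: dod cell term a0}, since $\tfrac12[f_n(u^L)+f_n(u^R)]=f_n(u)$. On boundary faces, only the $\extop{E}(w_h)$ piece appears, and it matches $\tfrac12[f_n(u)+f_n(\mathfrak{M}_n u)]=f_n(u)$. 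Therefore $J^{0,E}_h(u,w_h)=0$.

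The main obstacle is this surface bookkeeping. We must verify carefully that, over all ordered pairs, each face $j$ receives exactly the full sum $\sum_{i\neq j}p^E_{ij}$ for both test functions, including the asymmetric reflected case (2). We must also check the sign convention of $n_\gamma$ against the jump when $E$ is the right-sided cell. We would handle this by rewriting $\sum_{i<j}[p_{ij}(\cdot,\cdot,a_{ij})+p_{ji}(\cdot,\cdot,a_{ji})]$ as a sum over ordered pairs $i\neq j$ with $a_{ij}=\extop{E}w_h-\extop{E_j}w_h$, setting $\extop{E_j}w_h:=0$ for boundary $\gamma_j$.
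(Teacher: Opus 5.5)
Your proposal is correct and follows the paper's proof essentially step by step. All extensions of $u$ collapse to $u$ by Corollary~\ref{cor: extop on sum space}, $J^{s,E}_h$ vanishes because $S_n(u,u)=0$, the volume terms cancel by \eqref{eq: propagation forms volume consistency} together with $\sum_{\mathcal{E}}\omega_{\mathcal{E}}=0$, and the surface terms are regrouped face by face into $\sum_j\sum_{i\neq j}p^E_{ij}$ before applying \eqref{eq: propagation forms consistency}. Your ordered-pair bookkeeping is exactly the paper's rearrangement; if the ``w.l.o.g.'' in case (2) is read as relabelling both faces, it does not even need the identity $p^E_{ij}(u,u,\cdot)=p^E_{ji}(u,u,\cdot)$ that the paper invokes. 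The orientation issue you flag is harmless: $f_n$ is linear in $n$, and when $E$ is the right-sided cell both the outward normal of $E$ and $\extop{E}(w_h)-\extop{E_j}(w_h)$ flip sign relative to $n_\gamma$ and $\llbracket w_h\rrbracket$.
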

\begin{proof}
For $u$ being the exact solution, Corollary\,\ref{cor: extop on sum space} implies that
\begin{equation} \label{eq: exact solution ext op}
\extop{E_{i}}(u) = \extop{E}(u) = \extop{E}^\mathfrak{M}(u) =  \extop{E_i}^\mathfrak{M}(u)  = u.
\end{equation}
% Furthermore we know (or could imply from the above equations) that $\mathfrak{M}_n(u) = u$ at the boundary and also that $u^L_\gamma = u^R_\gamma$. We can thus rewrite the term~\eqref{eq: dod cell term a0} as
Furthermore $\mathfrak{M}_n(u) = u$ at the boundary and $u  ^L_\gamma = u^R_\gamma$. With this \eqref{eq: dod cell term a0} can be rewritten as
\begin{align*}
	\tfrac{1}{\eta_E} J^{0, E}_h(u, w_h) & = \phantomminus  \sum_{(i, j) \in \IE^2, i < j} J^{0, E}_{h, ij} (u, w_h) - \sum_{j \in \IE, \gamma_j \in \GammaInt(E)} \int_{\gamma_j} \langle f_n(u), \llbracket w_h \rrbracket \rangle\\
	& \phantomeq \binminus  \sum_{j \in \IE, \gamma_j \in \GammaExt(E)} \int_{\gamma_j} \langle f_n(u), w_h \rangle.
\end{align*}
We need to rewrite the first sum so that it matches the second and third. 
Using $p^E_{ij}(u,u,\cdot) = p^E_{ji}(u,u,\cdot)$
we rearrange terms inside the first sum to get
\begin{align*}
	\sum_{i, j \in \IE, i < j} & J^{0, E}_{h, ij} (u, w_h)\\
	& = \phantomplus \sum_{j \in \IE, \gamma_j \in \GammaInt(E)} \sum_{i \in \IE, i \neq j} p^E_{ij} (u, u, \extop{E}(w_h) - \extop{E_j}(w_h))\\
	& \phantomeq \binplus  \sum_{j \in \IE, \gamma_j \in \GammaExt(E)} \sum_{i \in \IE, i \neq j} p^E_{ij}(u, u, \extop{E}(w_h))\\
	& \overset{\mathclap{\eqref{eq: propagation forms consistency}}}{=} \phantomplus  \sum_{j \in \IE, \gamma_j \in \GammaInt(E)} \int_{\gamma_j} \langle f_n(u), \llbracket w_h \rrbracket \rangle + \sum_{j \in \IE, \gamma_j \in \GammaExt(E)} \int_{\gamma_j} \langle f_n(u), w_h \rangle.
\end{align*}
Thus $J^{0, E}_h(u, w_h) = 0$. Let us now consider the volume contributions~\eqref{eq: dod volume term ij} that make up the complete volume term~\eqref{eq: dod cell term a1}. Using~\eqref{eq: exact solution ext op} we have
	\begin{displaymath} 
	\begin{split}
		J_{h,ij}^{1,E}(u, w_h) & = \phantomplus \sum_{\mathcal{E} \in \{E, E_i, E_j\}} \omega_{\mathcal{E}} \big[p_V^E (u, u, \extop{\mathcal{E}}^{ij}(w_h))- \tfrac{2}{K(K-1)} \int_{E}  f (u) \cdot \nabla \extop{\mathcal{E}}^{ij}(w_h) \big]\\
		& \phantomeq \binplus \: \sum_{\mathcal{E} \in \{E, E_i, E_j\}} \omega_{\mathcal{E}} p_V^{E,*} (\extop{E_i}^{ij}(w_h), \extop{E_j}^{ij}(w_h), u)\\
		& \overset{\mathclap{\eqref{eq: propagation forms volume consistency}}}{=} \phantomplus \sum_{\mathcal{E} \in \{E, E_i, E_j\}} \omega_{\mathcal{E}} p_V^{E,*} (\extop{E_i}^{ij}(w_h), \extop{E_j}^{ij}(w_h), u) = 0
	\end{split}
\end{displaymath}
The last equation follows from the definition of the weighting factors $\omega_{\mathcal{E}}$ and the linearity of $p^{E, *}_V$ in the last argument. Now we can conclude $J_{h}^{1,E}(u, w_h) = 0$. Lastly we infer from~\eqref{eq: exact solution ext op} and the consistency of $S_n$ that $J^{s, E}_{ij}(u, w_h) = 0$ and $J^{s, E}(u, w_h) = 0$ by a term-wise inspection.
\end{proof}

\begin{acknowledgement}
The research of JG, CE and GB was supported by the Deutsche Forschungsgemeinschaft (DFG, German Research Foundation) - SPP 2410 Hyperbolic Balance Laws in
Fluid Mechanics: Complexity, Scales, Randomness (CoScaRa), specifically JG within the project 525877563 (A posteriori error estimators for statistical solutions
of barotropic Navier-Stokes equations) and CE and GB within project 526031774 (EsCUT: Entropy-stable high-order CUT-cell discontinuous Galerkin methods).
SM, CE and GB gratefully acknowledges support by the Deutsche Forschungsgemeinschaft (DFG, German Research Foundation) - 439956613 (Hypercut).
CE and GB further acknowledge support by the Deutsche Forschungsgemeinschaft (DFG, German Research Foundation) under Germany's Excellence Strategy EXC 2044 –390685587, Mathematics Münster: Dynamics–Geometry–Structure.
JG also acknowledges support by the German Science Foundation (DFG) via
grant TRR 154 (Mathematical modelling, simulation and optimization using the example of gas
networks), sub-project C05 (Project 239904186).
SM gratefully acknowledges support by the Deutsche Forschungsgemeinschaft (DFG, German Research Foundation) under Germany´s Excellence Strategy – The Berlin Mathematics Research Center MATH+ (EXC-2046/2, project ID: 390685689).
\end{acknowledgement}
\ethics{Competing Interests}{The authors have no conflicts of interest to declare that are relevant to the content of this chapter.}
%\textcolor{red}{Gibt es hier irgendwelche relevanten Konflikte?}
%%%%%%%%%%%%%%%%%%%%%%%% referenc.tex %%%%%%%%%%%%%%%%%%%%%%%%%%%%%%
% sample references
% %
% Use this file as a template for your own input.
%
%%%%%%%%%%%%%%%%%%%%%%%% Springer-Verlag %%%%%%%%%%%%%%%%%%%%%%%%%%
%
% BibTeX users please use
% \bibliographystyle{}
% \bibliography{}
%

\end{document}